\def\acts{\curvearrowright}
\newtheorem{theorem}{Theorem}
\newtheorem{lemma}[theorem]{Lemma}
\theoremstyle{definition}
\newtheorem{remark}[theorem]{Remark}
\theoremstyle{definition}
\theoremstyle{definition}
\newtheorem{oproblem}[theorem]{Problem}
\theoremstyle{definition}
\newtheorem{definition}[theorem]{Definition}
\newcommand{\Z}{\mathbb{Z}}
\newcommand{\ve}{\varepsilon}
\newcommand{\abs}[1]{\left\vert#1\right\vert}
\title[Ineffectiveness of the regularity lemma for bounded degree graphs]{The ineffectiveness of the regularity lemma for bounded degree graphs}
\author[Lyons]{Clark Lyons}
\address{
Clark Lyons \\
HUN-REN Alfr\'ed R\'enyi Institute of Mathematics,
Re\'altanoda u.~13-15, Budapest 1053, Hungary.
}
\email{lyons.clark@renyi.hu}
\author[Terlov]{Grigory Terlov}
\address{
Grigory Terlov \\
University of North Carolina at Chapel Hill,
Department of Statistics and Operations Research,
Chapel Hill, NC,
USA,
and
HUN-REN Alfr\'ed R\'enyi Institute of Mathematics,
Re\'altanoda u. 13-15, Budapest 1053, Hungary.
}
\email{gterlov@unc.edu}
\author[Vidny\'anszky]{Zolt\'an Vidny\'anszky}
\address{
Zolt\'an Vidny\'anszky \\
E\"{o}tv\"{o}s Lor\'{a}nd University, Institute of Mathematics, P\'azm\'any P\'{e}ter stny. 1/C, 1117 Budapest, Hungary
}
\email{zoltan.vidnyanszky@ttk.elte.hu}
\begin{document}
\begin{abstract}
We show that for any $\Delta \geq 3$, there is no bound computable from $(\varepsilon, r)$ on the size of a graph required to approximate a graph of maximum degree at most $\Delta$ up to $\varepsilon$ error in $r$-neighborhood statistics. This provides a negative answer to a question posed by Lovász. Our result is a direct consequence of the recent celebrated work of Bowen, Chapman, Lubotzky, and Vidick, which refutes the Aldous--Lyons conjecture.
\end{abstract}
\maketitle

\section{Introduction and main results}
Szemerédi’s regularity lemma and its variants provide a foundational framework for understanding dense graphs. Roughly speaking, the lemma asserts that every graph can be approximated, in terms of homomorphism densities, by a bounded-size graph, where the size bound is explicit and depends on the chosen error parameter.

To be more precise, for finite graphs $F$ and $G$ define the \textbf{homomorphism density of $F$ in $G$} by
\[
t(F,G)=\frac{\left|\mathrm{Hom}(F,G)\right|}{|V(G)|^{|V(F)|}},
\]
that is, the probability that a uniformly random function from the vertices of $F$ to the vertices of $G$ defines a graph homomorphism. The following is a well-known consequence of the Frieze--Kannan (weak) regularity lemma with bounds given in \cite{FriezeKannan}. 

\begin{theorem}[{\cite{FriezeKannan}}]
	\label{t:szemeredi}
There is an absolute constant $C$ such that for any $K$, any $\ve>0$, and any graph $G$, there is a graph $G'$ with $|V(G')|\le C^{K^2/\ve^2}$ such that $$|t(F,G)-t(F,G')|<\ve$$ for all graphs $F$ with $|E(F)|\le K$.
\end{theorem}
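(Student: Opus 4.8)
The plan is to follow the route signposted by the citation to \cite{FriezeKannan}: use the weak regularity lemma to replace $G$ by a bounded-complexity weighted graph, transfer homomorphism densities by a counting lemma, and then discretize back to an honest graph. Fix an auxiliary parameter $\eta>0$ to be calibrated at the end. Applying the Frieze--Kannan weak regularity lemma to $G$ produces a partition $V(G)=V_1\cup\dots\cup V_k$ into $k\le 2^{O(1/\eta^2)}$ parts such that the step-function weighted graph $H$ obtained by assigning every pair $(u,v)$ with $u\in V_i$, $v\in V_j$ the weight $d_{ij}=e(V_i,V_j)/(|V_i|\,|V_j|)$ satisfies $\|W_G-W_H\|_{\square}\le\eta$ in the cut norm; equivalently, $H$ is a weighted graph on just $k$ nodes carrying node weights $|V_i|/|V(G)|$. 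Then the standard cut-norm counting lemma applies: by the usual telescoping argument (replace the edges of $F$ one at a time, bounding each replacement by the cut norm of $W_G-W_H$ after integrating out the remaining coordinates) one gets $|t(F,G)-t(F,H)|\le c\,|E(F)|\cdot\|W_G-W_H\|_{\square}\le cK\eta$ for an absolute constant $c$, simultaneously for all $F$ with $|E(F)|\le K$.

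The real work is to replace the weighted graph $H$ by a genuine simple graph $G'$ of the required size with $|t(F,H)-t(F,G')|$ small for all such $F$. I would use a randomized blow-up: choose an integer $m$ (polynomial in $k,K,1/\eta$), give $G'$ a cluster $U_i$ of $\lceil m\,|V_i|/|V(G)|\rceil$ vertices for each $i$, insert an independent random bipartite graph of density $d_{ij}$ between distinct clusters $U_i,U_j$, and (a minor point) an independent random graph of density $d_{ii}$ inside each $U_i$. Since isolated vertices leave $t(F,\cdot)$ unchanged, it suffices to control $t(F,G')$ for the finitely many isomorphism types of $F$ with $|E(F)|\le K$ and no isolated vertex, so $|V(F)|\le 2K$, of which there are at most $2^{O(K^2)}$. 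For each fixed such $F$, a bounded-differences estimate applied to the edge-indicator variables — together with the elementary fact that the inserted random graphs reproduce homomorphism counts up to an error $O(|V(F)|^2/m)$ — shows $|t(F,G')-t(F,H)|<\eta$ with probability close to $1$ once $m\ge\mathrm{poly}(k,K,1/\eta)$; a union bound over the $2^{O(K^2)}$ types leaves a positive-probability event on which all the inequalities hold, and we take $G'$ to be any graph in it. This step is the main obstacle: one must verify that the blow-up reproduces \emph{all} homomorphism densities $t(F,\cdot)$ for $|E(F)|\le K$, not merely the edge densities $d_{ij}$, which is exactly what forces the inserted bipartite graphs to be pseudorandom and hence the randomized (or an explicit pseudorandom) construction rather than an arbitrary one.

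Finally one calibrates $\eta=\ve/(2(cK+1))$, so that the two error contributions sum to $cK\eta+\eta<\ve$. Then $k\le 2^{O(K^2/\ve^2)}$ and $m\le\mathrm{poly}(k,K,1/\ve)\le k^{O(1)}(K/\ve)^{O(1)}$, whence $|V(G')|\le k\cdot m\le 2^{O(K^2/\ve^2)}=C^{K^2/\ve^2}$ for a suitable absolute constant $C$, as claimed. I would also record a cleaner alternative that bypasses the regularity lemma entirely: take $G'$ to be the subgraph of $G$ induced by $q$ uniformly sampled vertices, and note that a bounded-differences estimate gives $|t(F,G')-t(F,G)|<\ve$ for all $F$ with $|E(F)|\le K$ already when $q=O(K^4/\ve^2)$, which is far below the stated bound — but since the paper invokes Frieze--Kannan, the weak-regularity route above is the one I would write up.
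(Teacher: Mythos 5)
The paper does not actually prove Theorem~\ref{t:szemeredi}; it is quoted as a known consequence of the Frieze--Kannan weak regularity lemma, so there is no internal proof to compare against. Your reconstruction is the standard derivation and is essentially correct: weak regularity gives a $k$-step weighted approximation with $k\le 2^{O(1/\eta^2)}$ in cut norm, the counting lemma transfers this to all $t(F,\cdot)$ with $|E(F)|\le K$ at cost $O(K\eta)$, and a randomized blow-up of polynomial size (with the $O(|V(F)|^2/m)$ degeneracy correction and a bounded-differences plus union-bound argument over the $2^{O(K^2)}$ relevant isomorphism types of $F$) converts the weighted graph into a simple one; calibrating $\eta\asymp\ve/K$ yields $|V(G')|\le 2^{O(K^2/\ve^2)}$. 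The only slips are cosmetic: $|V(G')|\le m+k$ rather than $k\cdot m$ (immaterial for the bound), and one should fix a normalization of the cut norm so that the constant $c$ in the counting lemma is explicit. Your closing remark is also correct and worth keeping: plain vertex sampling already gives the conclusion with $q=O(K^4/\ve^2)$ vertices, i.e.\ a polynomial rather than exponential bound, which is exactly the contrast with the bounded-degree setting that the paper is driving at.
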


We note that there are variants of this theorem for notions of sampling based on injective homomorphisms and graph embeddings. The theorem can also be adapted to the setting of graphs equipped with edge and vertex labelings (see \cite[Chapter 9]{lovasz2012large} and references therein).

For sparse graphs, any homomorphism density is close to zero and hence such an approximation is not useful. Instead, one approximates them by neighborhood statistics, defined as follows. Suppose that $G$ is a finite graph with degrees bounded by $\Delta$. A \textbf{rooted graph of radius $\le r$} is a graph $F_\bullet$ with a distinguished root vertex such that all vertices of $F_\bullet$ are of distance $\le r$ from the root. Rooted graphs are considered up to root-preserving isomorphism $\cong_\bullet$ of graphs. For a rooted graph $F_\bullet$ of radius $\le r$ and a (finite) graph $G$ define the corresponding $r$-\textbf{neighborhood statistic} in $G$ as
\begin{equation}\label{eq:neigbstat}
    u_{r}(F_\bullet,G)=\frac{\big|\{v\in V(G)\mid B_r(v)\cong_\bullet F_\bullet\}\big|}{|V(G)|},
\end{equation}
which is equal to the probability that the radius $r$ ball around a uniformly chosen vertex of $G$ is isomorphic, as a rooted graph, to $F_\bullet$.

Lov\'asz has asked for an analogue of Theorem \ref{t:szemeredi} in the case of bounded degree graphs, and the following observation was soon made by Alon \cite{unpub}.

\begin{theorem}[Alon \cite{unpub}, as in {\cite[Theorem 4.8.4]{Zhao_2023}}]\label{thm:Alon_regularity}
For a fixed $\Delta>0$ there is a function $N_\Delta(\ve,r)$ such that for every graph $G$ with maximum degree at most $\Delta$, there is a graph $G'$ of maximum degree at most $\Delta$ with $|V(G')|\le N_\Delta(\ve,r)$ such that 
\[
    \abs{u_{r}(F_\bullet,G)-u_{r}(F_\bullet,G')}<\ve
\] 
for all rooted graphs $F_\bullet$ of radius $\le r$. 
\end{theorem}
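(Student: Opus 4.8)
The plan is to prove this by a soft compactness argument rather than by any explicit construction: the relevant $r$-neighborhood data of a bounded-degree graph takes values in a \emph{finite}-dimensional compact space, and a totally bounded subset of a compact metric space admits, for each $\ve$, a finite $\ve$-net; the graphs realizing the points of such a net will serve as the desired approximants $G'$.

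First I would record a finiteness observation. For fixed $\Delta$ and $r$, every rooted graph of radius at most $r$ in which all degrees are $\le \Delta$ has at most $1 + \Delta + \Delta^2 + \dots + \Delta^r$ vertices (at most $\Delta^i$ vertices at distance $i$ from the root), and there are only finitely many graphs on a vertex set of bounded size; hence there are only finitely many root-preserving isomorphism types $F_1, \dots, F_m$ of such rooted graphs. For any finite graph $G$ with maximum degree $\le \Delta$ and any $v \in V(G)$, the ball $B_r(v)$ is one of the $F_i$, and $\sum_{i=1}^m u_r(F_i, G) = 1$. Thus $G \mapsto \big(u_r(F_1, G), \dots, u_r(F_m, G)\big)$ maps the class of finite graphs of maximum degree $\le \Delta$ into the standard simplex $\Sigma := \{x \in \R^m : x_i \ge 0, \ \sum_i x_i = 1\}$, which we equip with the metric induced by the $\ell^\infty$-norm $\|x\|_\infty = \max_i |x_i|$.

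Now let $P \subseteq \Sigma$ be the image of this map, i.e.\ the set of all $r$-neighborhood profiles realized by some finite graph of maximum degree $\le \Delta$. Since $\Sigma$ is a compact metric space, its subset $P$ is totally bounded, so for the given $\ve > 0$ there are finitely many points of $P$ whose $\ve$-balls cover $P$. Each of these points is, by definition of $P$, the profile $\big(u_r(F_i, H_j)\big)_{i=1}^m$ of some finite graph $H_j$ of maximum degree $\le \Delta$, for $j = 1, \dots, k$; set $N_\Delta(\ve, r) := \max_{1 \le j \le k} |V(H_j)|$.

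Finally, given any finite graph $G$ with maximum degree $\le \Delta$, its profile lies in some $\ve$-ball of the net, say the one around the profile of $H_j$; then $G' := H_j$ has maximum degree $\le \Delta$, satisfies $|V(G')| \le N_\Delta(\ve, r)$, and $\abs{u_r(F_\bullet, G) - u_r(F_\bullet, G')} < \ve$ for every rooted graph $F_\bullet$ of radius at most $r$, as required. I do not expect any genuine obstacle in carrying this out — the argument is entirely soft — but I do expect the conclusion to be unsatisfying in exactly one respect: the appeal to total boundedness is purely existential and yields no procedure for computing $N_\Delta$ from $(\ve, r)$. Making that defect precise, by showing it cannot be repaired, is precisely the business of the rest of this paper.
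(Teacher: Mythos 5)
Your argument is correct and is essentially identical to the one the paper gives in Section~\ref{sec:graphings}: both map a graph to its vector of $r$-neighborhood statistics in the finite-dimensional cube $[0,1]^{\mathcal{F}_\Delta(r)}$ (you use the simplex, an immaterial refinement), invoke total boundedness to extract a finite $\ve$-net of profiles realized by actual graphs, and take $N_\Delta(\ve,r)$ to be the largest vertex count among the net's representatives. No gaps; your closing remark about the purely existential nature of the bound matches the paper's framing as well.
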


This statement follows from the total-boundedness of the space of possible $r$-neighborhood statistics for graphs (see Section \ref{sec:proof_of_main} for the proof). Thus, unlike the case of dense graphs, the function $N_\Delta(\varepsilon,r)$ is not constructive. It was then asked by Lov\'asz \cite[page 358]{lovasz2012large} (see also \cite[Open Problem 4.8.5]{Zhao_2023}) whether it is possible to find an ``effective'' bound for $N_\Delta(\ve,r)$. He also suggested that an efficient, algorithmic way of constructing an approximating graph could be used also on graphings, and hence to find a finitary approximation, in turn, resolving positively the famous Aldous--Lyons conjecture \cite{AldousLyons}. 

In a recent breakthrough, Bowen, Chapman, Lubotzky, and
Vidick \cite{bowen2024aldouslyonsconjectureisubgroup,bowen2024aldouslyonsconjectureiiundecidability} gave a negative solution to the latter conjecture. In this note we show that their main undecidability result directly implies a negative answer to Lov\'asz's question:

\begin{theorem}[Regularity is ineffective for bounded degree graphs]\label{thm:ineffectiveness}
For any $\Delta\geq 3$, any regularity bound $N_\Delta(\ve,r)$ is not computable.
\end{theorem}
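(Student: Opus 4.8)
The plan is to reduce a known undecidable problem — coming from the Bowen–Chapman–Lubotzky–Vidick refutation of the Aldous–Lyons conjecture — to the problem of computing $N_\Delta(\varepsilon,r)$.

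First, let me recall what BCLV actually proved. The Aldous–Lyons conjecture asserts that every unimodular random graph (equivalently, every graphing, or every invariant random subgroup / sofic-type object in the relevant formulation) is a weak-* limit of finite graphs, i.e. is "sofic." BCLV refute this by producing, via their group-theoretic machinery (non-approximable groups / MIP* = RE type results), a specific explicit sequence of objects witnessing non-soficity in an effective way. The key consequence I want to extract is an *undecidability* statement phrased in terms of neighborhood statistics: there is no algorithm that, given (a finite description of) a target profile of neighborhood statistics together with $(\varepsilon, r)$, decides whether that profile is $\varepsilon$-approximable in $r$-statistics by *some* finite bounded-degree graph. More concretely, one should be able to extract from BCLV a computable family of "local constraint systems" (or finitely-described graphings, or labelled-graph local rules) $\{P_n\}_{n \in \Nat}$ on $\Delta$-bounded graphs such that the set $\{n : P_n \text{ is sofic}\}$ is not decidable — indeed not even co-r.e. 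This is the substantive input and the place where I expect the real work to be: translating the operator-algebraic / group-theoretic non-approximability output of \cite{bowen2024aldouslyonsconjectureisubgroup,bowen2024aldouslyonsconjectureiiundecidability} into a clean statement about non-approximability of neighborhood-statistic profiles of $\Delta$-regular graphs. I anticipate needing to pass through the dictionary between graphings, invariant random subgroups, and local statistics, and to track degree bounds carefully (this is why we need $\Delta \ge 3$).

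Granting that input, the reduction is short and is the heart of the present note. Suppose, for contradiction, that some regularity bound $N_\Delta(\varepsilon, r)$ were computable. Recall from Theorem~\ref{thm:Alon_regularity} that $N_\Delta$ has the following semantic property: for every $\Delta$-bounded $G$ there is a $\Delta$-bounded $G'$ with $|V(G')| \le N_\Delta(\varepsilon, r)$ that matches all radius-$r$ statistics of $G$ to within $\varepsilon$. Now, given a candidate local rule $P_n$ (equivalently, a target neighborhood-statistic profile realizable in the limit by a graphing), together with $\varepsilon$ and $r$, we can use a computable bound $M = N_\Delta(\varepsilon/2, r)$ to search exhaustively: enumerate *all* $\Delta$-bounded graphs on at most $M$ vertices — a finite, effectively listable set — and for each compute its radius-$r$ neighborhood statistics (a finite computation) and check whether they lie within $\varepsilon$ of the target profile. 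If the target profile is approximable by finite graphs at all, then it is approximable by one of size $\le M$ — this is exactly what computability of $N_\Delta$ buys us — so the search succeeds; if it is not approximable (not sofic), the search fails. Hence we could semi-decide, in fact decide by running the search at finer and finer $\varepsilon = 1/k$, whether $P_n$ is sofic. Combined with the analogous statement for graphings (a graphing is sofic iff all of its finite-radius statistic profiles are simultaneously approximable by finite graphs, which follows from the standard characterization of soficity via local-global convergence), this contradicts the undecidability extracted from BCLV.

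Let me spell out the one delicate point in the last step. "Sofic" for a graphing/limit object means approximability of its *whole* statistic profile — all finite radii at once — whereas $N_\Delta$ only promises $\varepsilon$-approximation for a fixed $r$. But that is enough: a standard diagonalization shows a graphing $\mathcal G$ is sofic iff for every $k$ there is a finite $\Delta$-bounded graph whose radius-$k$ statistics are within $1/k$ of those of $\mathcal G$, and by computability of $N_\Delta$ each such finite graph may be taken of size $\le N_\Delta(1/k, k)$, hence found by finite search. Thus soficity of the BCLV objects would be decidable, contradicting their theorem. The restriction $\Delta \ge 3$ is exactly the regime in which the BCLV non-sofic examples live (bounded-degree graphs of degree $1$ or $2$ are unions of paths and cycles and are trivially sofic), so the reduction applies for all $\Delta \ge 3$. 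The main obstacle, to repeat, is not the reduction but making precise and citing correctly the effective non-soficity statement underlying it; once that is in hand, computability of any $N_\Delta$ yields a decision procedure for a set BCLV proved undecidable, and Theorem~\ref{thm:ineffectiveness} follows. $\qed$
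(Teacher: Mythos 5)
Your reduction---assume $N_\Delta(\ve,r)$ is computable, enumerate all $\Delta$-bounded graphs on at most $N_\Delta(\ve/2,r)$ vertices, and thereby decide (up to a promise gap) whether a given target of $r$-neighborhood statistics is achievable by a finite graph---is exactly the reduction the paper uses, and that part of your argument is correct. The genuine gap is in the input you feed it. The statement you propose to ``extract from BCLV''---a computable family $\{P_n\}$ of local-statistics profiles for $\Delta$-bounded graphs whose soficity is not co-r.e.---is not what Bowen--Chapman--Lubotzky--Vidick prove, and as you have phrased it, it is essentially equivalent to the theorem you are trying to prove (this equivalence is the content of Theorem~\ref{t:LSDF}), so assuming it is circular. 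What BCLV actually provide (Theorem~\ref{undec}) is a computable map $M\mapsto(\mathcal{T}^M,\lambda^M)$ from Turing machines to continuous rational-valued functions on $\mathcal{P}(\mathbb{F}_d)$ with a gap promise on the \emph{sofic value} $\text{val}_{sof}(\mathcal{T}^M)=\sup_G\mathbb{E}_{v\in V(G)}[\mathcal{T}^M(\text{Stab}(v))]$: it equals $1$ if $M$ halts and is at most $1-\lambda^M$ otherwise.

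The missing idea is the bridge from sofic values to neighborhood statistics: since $\mathcal{T}$ is continuous, $\mathcal{T}(\text{Stab}(v))$ is determined by the $r$-ball around $v$ in the Schreier graph for a computable $r$, so $\text{val}(\mathcal{T},G)$ is a computable linear functional of the $r$-neighborhood statistics of $G$; hence a computable regularity bound yields, by finite search, a quantity $\beta$ with $\beta\le\text{val}_{sof}(\mathcal{T})\le\beta+\Theta$ for any desired $\Theta$, and taking $\Theta<\lambda^M$ decides halting. Without this step your argument has no undecidable problem to reduce from; this is precisely why the paper stresses that a bare counterexample to Aldous--Lyons (or an unspecified ``undecidability of soficity'') would not suffice. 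A second, smaller gap: the BCLV statement lives in the world of $\mathbb{F}_d$ Schreier graphs, so one needs a two-way encoding between $\Delta$-bounded graphs and Schreier graphs that approximately preserves neighborhood statistics and that can decode small graphs \emph{not} in the image of the encoding (Lemma~\ref{lem:equiv} and the appendix); ``tracking degree bounds carefully'' understates this. Finally, note that your finer-and-finer-$\ve$ search only renders soficity co-r.e.\ given a computable bound, so you would indeed need ``not co-r.e.''; the gap-promise formulation is what makes the relevant threshold question outright decidable and hence contradictory.
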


Bowen, Chapman, Lubotzky, and
Vidick employed so-called subgroup tests to encode non-local games and then built on the ideas from the earlier deep results of Ji, Natarajan, Vidick, Wright, and Yuen \cite{ji2022mipre} to deduce undecidability. 
We note that although the subgroup test approach might not be necessary for their original goal \cite{manzoor2025equivalencerelationvonneumann} it is indeed essential for our argument. Let us also mention that solely a counterexample to the Aldous--Lyons conjecture in itself would have been likely insufficient to show Theorem \ref{thm:ineffectiveness}, but, the insight provided by the undecidability results suffices for this purpose. 

Furthermore, there seems to be an unnoticed implication in the converse direction. Namely, we provide an easy argument showing that a positive answer to the Aldous--Lyons conjecture would have implied a computable bound on $N_\Delta(\ve,r)$. This observation may still be of interest, as one could attempt to directly show that such a bound does not exist, thereby providing an alternative proof of the failure of the conjecture.

\subsection*{Organization}
In Section~\ref{sec:proof_of_main} we state the undecidability result (as a combination of the statements from \cite{bowen2024aldouslyonsconjectureisubgroup,bowen2024aldouslyonsconjectureiiundecidability}) and we prove the analogous result to Theorem~\ref{thm:ineffectiveness} stated for Schreier graphs (Theorem~\ref{main}), which is equivalent to Theorem~\ref{thm:ineffectiveness} by Lemma~\ref{lem:equiv}.
Section~\ref{sec:graphings} is dedicated to the upper approximation of $r$-neighborhood statistics of graphings, and showing the implication mentioned above.  In Section \ref{sec:decision} we describe a concrete decision function, the inexistence of which would also imply the failure of the Aldous-Lyons conjecture. 
In Section~\ref{sec:open} we state several open problems.
Finally, in Appendix~\ref{ssec:encoding} we present standard encodings between graphs and Schreier graphs and prove Lemma~\ref{lem:equiv}.

\section{Proof of Theorem~\ref{thm:ineffectiveness}}\label{sec:proof_of_main}

Both sampling regularity for dense graphs and neighborhood statistics regularity are well known to extend to decorated graphs. We will work with graphs equipped with a Schreier decoration of an action of a free group on finitely many generators.

\begin{definition}
For a natural number $d$, an $\mathbb{F}_d$ \textbf{Schreier graph} $G$ consists of a vertex set $V(G)$, a symmetric (but not necessarily reflexive) edge set $E(G)\subseteq V(G)^2$, and an edge labeling $c_G:E(G)\to\mathcal{P}(\{a_1,\ldots,a_d,a_1^{-1},\ldots,a_d^{-1}\})$ such that:
\begin{itemize}
    \item we have $a_i\in c_G(x,y)\iff a_i^{-1}\in c_G(y,x)$
    \item for any $x\in V(G)$ and any $\ell\in\{a_1,\ldots,a_d,a_1^{-1},\ldots,a_d^{-1}\}$ there is a unique $y\in V(G)$ such that $(x,y)\in E(G)$ and $\ell\in c_G(x,y)$. 
\end{itemize}
The edges and edge labels specify the same information as an action of the free group on $n$ generators $a_G:\mathbb{F}_d\curvearrowright V(G)$.
\end{definition}

For any $\mathbb{F}_d$ Schreier graph $G$, around any vertex $v\in V(G)$ we can consider a ball $B_r(G)$ of radius $r$ as a rooted edge-labeled (non-simple) graph. We consider these graphs up to root-preserving and label-preserving graph isomorphism $\cong^*_\bullet$. This allows us to formulate neighborhood statistics for $\mathbb{F}_d$ Schreier graphs exactly as before but with this new notion of isomorphism. Just as in the undecorated case, there is a regularity bound obtained by a compactness (or, more precisely, total-boundedness) argument, see Section \ref{sec:graphings}.

\begin{definition}
For a rooted edge-labeled graph $F_\bullet$ of radius $\le r$ and an $\mathbb{F}_d$ Schreier graph $G$, define the \textbf{Schreier neighborhood statistic} 
\[
    u^*_r(F_\bullet,G)=\frac{\big|\{v\in V(G)\mid B_r(v)\cong^*_\bullet F_\bullet\}\big|}{|V(G)|}.
\]
An $\mathbb{F}_d$ \textbf{Schreier regularity bound} is a function $N^*_d(\ve,r)$ such that for any $\mathbb{F}_d$ Schreier graph $G$ there is an $\mathbb{F}_d$ Schreier graph $G'$ with $|V(G')|\le N^*(\ve,r)$ such that 
\[
    \abs{u^*_r(F_\bullet,G)-u^*_r(F_\bullet,G')}<\ve
\]
for all rooted edge-labeled graphs $F_\bullet$ of radius $\le r$.
\end{definition}

We are now ready state a version of Theorem~\ref{thm:ineffectiveness} for Schreier graphs. 

\begin{theorem}[Regularity is ineffective for Schreier graphs]\label{main}
For $d\geq2$ any Schreier regularity bound $N_d^*(\ve,r)$ is not a computable function.
\end{theorem}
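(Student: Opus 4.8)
The plan is to reduce from the undecidability result of Bowen--Chapman--Lubotzky--Vidick, which I will use in the following form: there is no algorithm that, given (a description of) a subgroup test $\mathfrak{T}$, decides whether $\mathfrak{T}$ has a perfect finite-dimensional (equivalently, a perfect Schreier / sofic) strategy or whether every strategy has value bounded away from $1$ by a fixed gap. Since such a test can be presented combinatorially as a finite list of constraints on labelings of balls in an $\mathbb{F}_d$ Schreier graph, each test $\mathfrak{T}$ should translate into a finite family $\mathcal{F}_{\mathfrak{T}}$ of ``forbidden'' rooted edge-labeled graphs of some radius $r = r(\mathfrak{T})$ together with a target: a Schreier graph $G$ ``passes'' $\mathfrak{T}$ with error $\delta$ exactly when $\sum_{F_\bullet \in \mathcal{F}_{\mathfrak{T}}} u^*_r(F_\bullet, G) < \delta$. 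The key point is that whether a finite Schreier graph nearly satisfies all the constraints of the test is entirely a statement about its radius-$r$ neighborhood statistics.

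The main steps are then as follows. First, I would make precise the encoding $\mathfrak{T} \mapsto (\mathcal{F}_{\mathfrak{T}}, r(\mathfrak{T}))$ and record its computability: given $\mathfrak{T}$ one can effectively write down the radius $r$, the finite set of rooted labeled graphs appearing, and which of them are forbidden. Second, I would show the ``completeness'' direction: if $\mathfrak{T}$ has a perfect finite-dimensional strategy, then it has a perfect strategy realized on a finite Schreier graph $G$ (here is where one uses that the relevant strategies can be taken combinatorial/sofic and hence finitely approximated on Schreier graphs), so $\sum_{F_\bullet \in \mathcal{F}_{\mathfrak{T}}} u^*_{r}(F_\bullet, G) = 0$. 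Third, the ``soundness'' direction: if every strategy has value at most $1 - \epsilon_0$, then for \emph{every} finite Schreier graph $G$ we have $\sum_{F_\bullet \in \mathcal{F}_{\mathfrak{T}}} u^*_{r}(F_\bullet, G) \ge \epsilon_0$, because a finite Schreier graph is itself a (sofic) strategy whose success probability is $1$ minus this sum. Fourth, suppose toward a contradiction that some Schreier regularity bound $N^*_d(\ve, r)$ were computable. Then, given $\mathfrak{T}$, set $r = r(\mathfrak{T})$, pick $\ve = \epsilon_0/3$ (or whatever constant the gap theorem provides), compute $M = N^*_d(\ve, r)$, and brute-force search over all $\mathbb{F}_d$ Schreier graphs on at most $M$ vertices. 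If some such $G'$ has $\sum_{F_\bullet \in \mathcal{F}_{\mathfrak{T}}} u^*_{r}(F_\bullet, G') < \epsilon_0 - \ve$, output ``perfect strategy exists''; otherwise output ``no''. Regularity guarantees that when a perfect strategy exists on some (possibly huge) $G$, an $\ve$-approximating $G'$ of size $\le M$ exists and witnesses a small sum; soundness guarantees that when no perfect strategy exists, every $G'$ has sum $\ge \epsilon_0 > \epsilon_0 - \ve$. This decides the undecidable problem, a contradiction, proving Theorem~\ref{main}.

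The step I expect to be the genuine obstacle is the second one: passing from ``has a perfect finite-dimensional strategy'' to ``has a perfect strategy on a finite Schreier graph with the \emph{same} constraint radius $r$'', with no loss. This is exactly the point flagged in the introduction --- that a bare counterexample to Aldous--Lyons is not enough, but the structure of the subgroup-test undecidability proof is --- so I would lean on the fact that subgroup tests are designed so that perfect strategies correspond to actual quotients of the free group, i.e. to Schreier graphs, rather than to operator-algebraic objects that only approximately close up. Concretely I would check that a perfect PCP-type strategy for $\mathfrak{T}$ yields a finite-index subgroup $H \le \mathbb{F}_d$ with the Schreier coset graph $\mathbb{F}_d / H$ satisfying all constraints exactly, and that conversely any finite Schreier graph is a legitimate strategy. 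A secondary technical point is bookkeeping the gap: one must make sure the constant $\epsilon_0$ from the BCLV gap theorem is uniform over all inputs $\mathfrak{T}$ (or at least computable from $\mathfrak{T}$), so that the choice $\ve = \epsilon_0/3$ in the decision procedure is legitimate; this is standard for these hardness results but should be stated explicitly. Everything else --- the compactness-free, purely finite brute-force search, the triangle-inequality manipulation of the statistics, and the reduction to a Turing machine halting problem --- is routine.
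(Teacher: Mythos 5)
Your proposal is correct and follows essentially the same route as the paper: encode the BCLV gap/undecidability result as a statement about radius-$r$ neighborhood statistics of Schreier graphs, and use the regularity bound to reduce estimating the sofic value to a brute-force search over graphs of bounded size. The one obstacle you flag---realizing a perfect strategy \emph{exactly} on a finite Schreier graph---dissolves if you quote the BCLV result, as the paper does, in terms of the sofic value $\mathrm{val}_{sof}$ (a supremum over finite Schreier graphs): then you only need finite graphs of value arbitrarily close to $1$ rather than an exact witness, and the regularity bound approximates this supremum from both sides within the gap $\lambda^M$.
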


This version is equivalent to the original theorem for graphs by the following lemma. The proof of this equivalence relies on a well known fact that finite graphs can be encoded by Schreier graphs and vice versa. We present this equivalence in Appendix~\ref{ssec:encoding} in full detail.

\begin{lemma}\label{lem:equiv}
Let $\Delta\geq3$ and $d\geq2$. A sparse regularity bound $N_\Delta(\varepsilon,r)$ can be computed from any Schreier regularity bound $N_d^*(\varepsilon,r)$ as an oracle and vice versa.
\end{lemma}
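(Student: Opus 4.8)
The plan is to exhibit explicit, computable encodings in both directions between finite graphs of bounded degree and $\mathbb{F}_d$ Schreier graphs, and to verify that neighborhood statistics are preserved (or transformed in a controlled, computable way) under these encodings. The key observation is that such encodings are \emph{local}: the isomorphism type of a radius-$r$ ball in the encoded object is determined by the isomorphism type of a radius-$r'$ ball in the original object, where $r'$ depends computably on $r$ (and on $\Delta$, $d$). Consequently the map on neighborhood-statistic vectors is, up to a computable change of radius and a computable relabeling of finitely many ball types, essentially the identity, so a size bound for one notion of regularity immediately yields a size bound for the other.

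Concretely, for the direction from Schreier graphs to graphs, I would first recall the standard gadget: replace each vertex of an $\mathbb{F}_d$ Schreier graph $G$ by a small rigid gadget of bounded size encoding the $2d$ half-edge labels, and connect gadgets according to the edges of $G$, producing a simple graph $\widehat G$ of maximum degree $\le \Delta$ (for $\Delta$ depending only on $d$). One checks that $|V(\widehat G)| = c_d \cdot |V(G)|$ for an explicit constant $c_d$, and that the radius-$r$ ball of a vertex of $\widehat G$ is determined by — and determines, for the ``center'' vertices of gadgets — the radius-$O(r)$ ball in $G$. For the reverse direction, one orients and properly edge-colors (after a bounded blow-up if necessary, e.g.\ via Vizing with $\Delta+1$ colors and an auxiliary construction to symmetrize) the bounded-degree graph to obtain Schreier labels, again with a computable and local correspondence of balls. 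Given a Schreier regularity bound $N_d^*$, to approximate a graph $G$ one encodes $G$ as a Schreier graph $G^\sharp$, applies $N_d^*$ to obtain a small Schreier graph $G'$ approximating $G^\sharp$ in radius-$r'$ statistics, then decodes $G'$ back to a bounded-degree graph; the locality of both encodings guarantees that the resulting graph approximates $G$ in radius-$r$ statistics, with size bounded by a computable function of $N_d^*$, and symmetrically in the other direction.

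The main obstacle, and the point requiring the most care, is the \textbf{decoding step}: an arbitrary small Schreier (resp.\ bounded-degree) graph produced by the regularity bound need not literally be in the image of the encoding map, so one cannot simply ``invert'' the gadget construction. The fix is to design the gadgets to be recognizable and correctable: given any Schreier graph $G'$ close to $G^\sharp$ in local statistics, most radius-$r'$ balls of $G'$ look like balls of a genuine encoding, and one can locally repair the bounded fraction of ``defective'' vertices (deleting or rerouting a small number of edges/vertices) to obtain a legitimately decodable graph, changing the neighborhood statistics by only $O(\ve)$ and the size by a bounded factor. Managing this error bookkeeping — ensuring the ``defect repair'' is itself computable and that the accumulated error stays below the target $\ve$ after a computable rescaling of the input accuracy — is the technical heart of the argument; everything else is routine gadgetry and a tracking of how $r$ and $\ve$ transform, which we carry out in detail in Appendix~\ref{ssec:encoding}.
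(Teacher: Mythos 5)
Your proposal follows essentially the same route as the paper's Appendix~\ref{ssec:encoding}: local gadget encodings in both directions, application of the given regularity bound to the encoded object, and a local decoding that tolerates the fact that the approximating graph need not lie in the image of the encoding by treating the $O(\ve)$-fraction of defective vertices as repairable noise. The only divergence is cosmetic --- for the graph-to-Schreier direction you suggest an edge-coloring (Vizing) encoding where the paper uses the directed-edge construction with a rotation and a flip generator --- and both are standard and workable.
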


The powerset $\mathcal{P}(\mathbb{F}_d)$ carries compact Hausdorff topology from viewing it as a product of discrete spaces $2^{\mathbb{F}_d}$. Let $\mathscr{T}_d$ denote the set of all continuous rational-valued functions $\mathcal{T}$ on $\mathcal{P}(\mathbb{F}_d)$. By continuity, for any such function $\mathcal{T}$ there is a finite set $K\subseteq\mathbb{F}_d$ such that for any $S\subseteq\mathbb{F}_d$, then output $\mathcal{T}(S)$ is determined by $S\cap K$.


The \textbf{value} of a continuous rational function $\mathcal{T}$ with respect to an $\mathbb{F}_d$ Schreier graph is defined by 
\[
    \text{val}(\mathcal{T},G)=\mathbb{E}_{v\in V(G)}\left[\mathcal{T}(\text{Stab}(v))\right],
\] 
where $\text{Stab}(v)$ is the stabilizer of $v$ in the induced action of $\mathbb{F}_d$ on $V(G)$ and the expectation is over a uniform random choice of vertex $v\in V(G)$. The \textbf{sofic value} of a continuous rational function is the supremum of its values with respect to all (finite) $\mathbb{F}_d$ Schreier graphs 
\[
    \text{val}_{sof}(\mathcal{T})=\sup_{G}\text{val}(\mathcal{T},G).
\]

Then for any $n$ the set of rational lower bounds for sofic values 
\[
    \{(\mathcal{T},q)\in\mathscr{T}_d\times\mathbb{Q}\mid\text{val}_{sof}(\mathcal{T})>q\}
\]
is a computably enumerable set, through a brute-force search of all $\mathbb{F}_d$ Schreier graphs. 

The following theorem summarizes a key step of the argument of Bowen, Chapman, Lubotzky, and
Vidick in their resolution of the Aldous--Lyons conjecture. See \cite[Theorems 1.13 and 7.3]{bowen2024aldouslyonsconjectureisubgroup} and \cite[Theorem 1.1]{bowen2024aldouslyonsconjectureiiundecidability} for the general result.

\begin{theorem}[\cite{bowen2024aldouslyonsconjectureisubgroup,bowen2024aldouslyonsconjectureiiundecidability}]\label{undec}
For any $d\geq2$ there is a computable map which takes as input a Turing machine $M$ and outputs a continuous rational function $\mathcal{T}^M\in\mathscr{T}_d$ and rational number $\lambda^M>0$ such that
\begin{itemize}
    \item if $M$ halts then $\text{val}_{sof}(\mathcal{T}^M)=1$,
    \item if $M$ does not halt then $\text{val}_{sof}(\mathcal{T}^M)\le 1-\lambda^M$.
\end{itemize}
\end{theorem}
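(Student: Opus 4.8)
The plan is to show that a computable Schreier regularity bound would yield a decision procedure for the halting problem, contradicting Theorem~\ref{undec}. The bridge between the two statements is the observation that, for a continuous rational function $\mathcal{T}\in\mathscr{T}_d$, the quantity $\text{val}(\mathcal{T},G)$ is a linear functional of the vector $\big(u^*_r(F_\bullet,G)\big)_{F_\bullet}$ of $r$-neighborhood statistics of $G$, once $r$ is taken large enough, and that the Lipschitz constant of this functional is computable from $\mathcal{T}$.

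Concretely, fix a finite set $K\subseteq\mathbb{F}_d$ with the property that $\mathcal{T}(S)$ depends only on $S\cap K$, and let $r$ be at least the maximal word length of an element of $K$. For a vertex $v$ of an $\mathbb{F}_d$ Schreier graph $G$ and a word $w\in K$, whether $w$ fixes $v$ is decided by following from $v$ the walk of length $|w|\le r$ spelled out by $w$; this walk together with its edge labels lies inside the radius-$r$ ball $B_r(v)$. Hence $\text{Stab}(v)\cap K$, and therefore the rational $\mathcal{T}(\text{Stab}(v))$, depends only on the isomorphism type $F_\bullet\cong^*_\bullet B_r(v)$; denote this rational by $\mathcal{T}_{F_\bullet}$. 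It follows that
\[
    \text{val}(\mathcal{T},G)=\sum_{F_\bullet}u^*_r(F_\bullet,G)\,\mathcal{T}_{F_\bullet},
\]
where the sum ranges over the finitely many radius-$r$ rooted edge-labeled $\mathbb{F}_d$ Schreier graphs, of which there are some number $L_{d,r}$. Consequently, whenever $|u^*_r(F_\bullet,G)-u^*_r(F_\bullet,G')|<\ve$ for all $F_\bullet$, we obtain $|\text{val}(\mathcal{T},G)-\text{val}(\mathcal{T},G')|\le L_{d,r}\,\|\mathcal{T}\|_\infty\,\ve$. Crucially, $K$, a valid $r$, the count $L_{d,r}$, and $\|\mathcal{T}\|_\infty$ are all computable from the finite data describing $\mathcal{T}$.

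Now assume for contradiction that some Schreier regularity bound $N_d^*(\ve,r)$ is computable. Given a Turing machine $M$, use Theorem~\ref{undec} to compute $\mathcal{T}^M$ and $\lambda^M>0$; from $\mathcal{T}^M$ compute $r^M$, $L_{d,r^M}$, $\|\mathcal{T}^M\|_\infty$, and then a positive rational $\ve^M$ with $L_{d,r^M}\,\|\mathcal{T}^M\|_\infty\,\ve^M<\lambda^M/2$. Put $N:=N_d^*(\ve^M,r^M)$. By the regularity bound, every finite $\mathbb{F}_d$ Schreier graph $G$ admits an approximating graph $G'$ with $|V(G')|\le N$ whose $r^M$-neighborhood statistics agree with those of $G$ to within $\ve^M$, so $\text{val}(\mathcal{T}^M,G)\le\text{val}(\mathcal{T}^M,G')+\lambda^M/2$. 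Taking the supremum over all $G$ and using that there are only finitely many $\mathbb{F}_d$ Schreier graphs on at most $N$ vertices,
\[
    V_N\ \le\ \text{val}_{sof}(\mathcal{T}^M)\ \le\ V_N+\tfrac{\lambda^M}{2},\qquad\text{where}\quad V_N:=\max_{|V(G')|\le N}\text{val}(\mathcal{T}^M,G').
\]
The rational $V_N$ is computable: one enumerates the finitely many Schreier graphs on at most $N$ vertices and, for each, evaluates $\text{val}(\mathcal{T}^M,G')$ exactly by checking a bounded-length walk condition at every vertex. If $M$ halts then $\text{val}_{sof}(\mathcal{T}^M)=1$, forcing $V_N\ge 1-\lambda^M/2$; if $M$ does not halt then $\text{val}_{sof}(\mathcal{T}^M)\le 1-\lambda^M$, forcing $V_N\le 1-\lambda^M<1-\lambda^M/2$. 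Thus the decidable test ``$V_N\ge 1-\lambda^M/2$?'' determines whether $M$ halts, contradicting the undecidability of the halting problem. Therefore no Schreier regularity bound $N_d^*(\ve,r)$ is computable.

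The main obstacle is the locality step: one must verify carefully that $\mathcal{T}(\text{Stab}(v))$ really is determined by a ball whose radius is bounded effectively in terms of the window $K$, and that the induced map from $r$-neighborhood statistics to $\text{val}(\mathcal{T},\cdot)$ is Lipschitz with a computable constant. Everything afterwards is effective bookkeeping, together with the elementary observation that $\text{val}(\mathcal{T}^M,\cdot)$ takes exactly computable rational values on any fixed finite Schreier graph, so that $V_N$ can be computed outright.
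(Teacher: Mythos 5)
Your proposal does not prove the statement in question. Theorem~\ref{undec} is the deep external result of Bowen, Chapman, Lubotzky, and Vidick: it asserts the existence of a \emph{computable reduction} from the halting problem to sofic values, i.e.\ an algorithm that, given a Turing machine $M$, constructs a continuous rational function $\mathcal{T}^M$ (in their work, a subgroup test) and a gap $\lambda^M>0$ with $\text{val}_{sof}(\mathcal{T}^M)=1$ if $M$ halts and $\text{val}_{sof}(\mathcal{T}^M)\le 1-\lambda^M$ otherwise. Proving it requires the machinery of subgroup tests encoding non-local games and the techniques behind $\mathrm{MIP}^*=\mathrm{RE}$; the paper itself does not prove it but quotes it from \cite{bowen2024aldouslyonsconjectureisubgroup,bowen2024aldouslyonsconjectureiiundecidability}. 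Your argument never constructs $\mathcal{T}^M$ or $\lambda^M$; instead it \emph{assumes} Theorem~\ref{undec} ("use Theorem~\ref{undec} to compute $\mathcal{T}^M$ and $\lambda^M$") and derives from it that no Schreier regularity bound is computable. That is circular as a proof of the stated theorem.

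What you have actually written is, in substance, the paper's proof of Theorem~\ref{main} (ineffectiveness for Schreier graphs): the locality observation that $\mathcal{T}(\text{Stab}(v))$ depends only on the isomorphism type of $B_r(v)$ once $r$ covers the window $K$, the resulting Lipschitz estimate $|\text{val}(\mathcal{T},G)-\text{val}(\mathcal{T},G')|\le L_{d,r}\,\|\mathcal{T}\|_\infty\,\ve$, the computation of $\beta_{\mathcal{T},\Theta}$ (your $V_N$) by brute force over graphs of size at most $N^*_d(\ve,r)$, and the sandwich $\beta\le\text{val}_{sof}\le\beta+\Theta$ used to decide halting. That part is correct and matches the paper's argument for Theorem~\ref{main} essentially step for step. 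But as a proof of Theorem~\ref{undec} it has a fatal gap: the entire content of that theorem, namely how to encode a Turing machine into a continuous rational function with a computable soundness gap, is missing and cannot be recovered from regularity considerations alone.
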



We briefly note how the result of Bowen, Chapman, Lubotzky, and
Vidick is stronger than what is quoted above. For them, the continuous rational function $\mathcal{T}^M$ is restricted to be a subgroup test, which is a certain kind of continuous rational function with outputs in $[0,1]$ which mimics an interactive proof system between ``prover'' and ``verifier'' entities. Additionally, in the case that $\text{val}_{sof}(\mathcal{T}^M)=1$, there is a (finite) $\mathbb{F}_d$ Schreier graph $G$ which achieves this supremum so $\text{val}_{sof}(\mathcal{T}^M,G)=1$. The proof of Theorem~\ref{main} only uses that the sofic values of continuous rational functions cannot be computably approximated from above uniformly.


\begin{proof}[Proof of Theorem~\ref{main}]
We show how to computably enumerate rational upper bounds for sofic values of a continuous rational function from a Schreier regularity bound $N^*_d(\ve,r)$.
Consider a continuous rational function $\mathcal{T}$ whose output is determined by intersection with the finite subset $K\subseteq\mathbb{F}_d$. Let $r$ be a radius large enough that in the Cayley graph of $\mathbb{F}_d$, the ball of radius $r$ around the identity contains $K$. 
Note that for a vertex $v$ in an $\mathbb{F}_d$ Schreier graph $G$, the value $\mathcal{T}(\text{Stab}(v))$ depends only on the edge-labeled rooted subgraph $F_\bullet$ induced by $B_r(v)$, considered up to a root and label preserving isomorphism. 
We denote this value $\mathcal{T}(\text{Stab}(F_\bullet))$.
Let $\mathcal{F}^*_d(r)$ be the finite set that consists of all rooted graphs of radius $\le r$ with edges labeled by the standard generating set of $\mathbb{F}_d$, considered up to a rooted and label-preserving isomorphism. 
Let $m(\mathcal{T})=\max_{F_\bullet\in \mathcal{F}^*_d(r)}\mathcal{T}(\text{Stab}(F_\bullet))$.
For any $\Theta>0$ if 
\[
\abs{u^*_r(F_\bullet,G)-u^*_r(F_\bullet,G')}<\ve=\frac{\Theta}{|\mathcal{F}^*_d(r)|\cdot m(\mathcal{T})}
\]
for all $F_\bullet\in\mathcal{F}^*_d(r)$, then 
\[
    \left|\text{val}(\mathcal{T},G)-\text{val}(\mathcal{T},G')\right|\le\sum_{F_\bullet\in\mathcal{F}^*_d(r)}|u^*_r(F_\bullet,G)-u^*_r(F_\bullet,G')|\cdot\mathcal{T}(\text{Stab}(F_\bullet))<\Theta.
\] 

Thus for each $\Theta>0$ we can compute all $\mathbb{F}_d$ Schreier graphs with at most $N^*_d(\ve,r)$ vertices for this value of $\ve$, and we can compute the maximum value 
\[
    \beta_{\mathcal{T},\Theta}=\max_{|V(G')|\le N^*_d(\ve,r)}\text{val}(\mathcal{T},G')
\]
among these graphs. Then by the Schreier regularity bound we have 
\begin{equation}
	\label{e:approx}
    \beta_{\mathcal{T},\Theta}\le\text{val}_{sof}(\mathcal{T})\le \beta_{\mathcal{T},\Theta}+\Theta.
\end{equation}

But this contradicts Theorem \ref{undec}: indeed, given a Turing-machine $M$, we could use Theorem \ref{undec} to calculate $\mathcal{T}^M$ and $\lambda^M$. Then, using $\mathcal{T}^M$ we could find an $r$ as above and calculate $\beta_{\mathcal{T},\Theta}$ for $\Theta=\lambda^M/2$. Finally, \eqref{e:approx} would guarantee that $M$ halts iff $\beta_{\mathcal{T},\Theta}>1-\lambda^M$. 

\end{proof}

\section{Local statistics of graphings}\label{sec:graphings}

Let $\mathcal{F}_\Delta(r)$ denote the (finite) set of rooted graphs of maximal degree $\le \Delta$ and radius $\le r$ around the root, considered up to rooted graph isomorphism. Similarly, let $\mathcal{G}_\Delta$ denote the (infinite) set of finite graphs of maximum degree $\le \Delta$, also considered up to an isomorphism. There is a natural map $U_{r}:\mathcal{G}_\Delta\to[0,1]^{\mathcal{F}_\Delta(r)}$ that encodes $r$-neighborhood statistics of $G\in \mathcal{G}_\Delta$ in terms of all possible neighborhood statistics $\{u_{r}(F_\bullet,G)\}_{F_\bullet\in\mathcal{F}_\Delta(r)}$, namely
\[
U_{r}(G):=\{F_\bullet\mapsto u_{r}(F_\bullet,G)\}.
\]
Since $[0,1]^{\mathcal{F}_\Delta(r)}$ is compact in the metric $d_\infty$, the image $U_{r}(\mathcal{G}_\Delta)$ is totally bounded. 

So for any $\ve>0$ there is a finite subset $\mathcal{N}\subseteq\mathcal{G}_\Delta$ such that for all $G\in \mathcal{G}_\Delta$ there exists $G'\in\mathcal{N}$ such that $d_\infty(U_{r}(G),U_{r}(G'))<\ve$ which implies 
\[
\abs{u_{r}(F_\bullet,G)-u_{r}(F_\bullet,G')}<\ve
\] 
for all rooted graphs $F_\bullet$ of radius $\le r$. 

\begin{remark}
In fact, taking $N_\Delta(\ve,r)$ to be the maximum number of vertices of a graph in $\mathcal{N}$ yields a proof of Theorem~\ref{thm:Alon_regularity} as presented in \cite[Theorem 4.8.4]{Zhao_2023} (with the only difference that it is stated in terms of $\ell^\infty$ distance instead of the total variation distance, but since $\mathcal{F}_\Delta$ is finite, these distances are equivalent).
\end{remark}

The same argument proves the version of Theorem~\ref{thm:Alon_regularity} for $\mathbb{F}_d$ Schreier graphs and the map $U^*_{d,r}:\mathcal{G}^*_d\to[0,1]^{\mathcal{F}_d^*(r)}$ from finite $\mathbb{F}_d$ Schreier graphs to $r$-neighborhood statistics.

This map extends to the set of Schreier graphings obtained by measure preserving actions of $\mathbb{F}_d$ on a standard probability space. It is easy to see that the value only depends on the invariant random subgroup obtained as the stabilizer of a random vertex for such an action.

Consider the space of probability measures on the power set of $\mathbb{F}_d$, denoted as $\mathrm{Prob}(\mathcal{P}(\mathbb{F}_d))$. 
An \textbf{invariant random subgroup} of $\mathbb{F}_d$ is a Borel probability measure $\mu\in\mathrm{Prob}(\mathcal{P}(\mathbb{F}_d))$ supported on subgroups of $\mathbb{F}_d$ which is invariant under conjugation. 
We denote the set of invariant random subgroups of $\mathbb{F}_d$ by $\text{IRS}_d$ and define the corresponding $r$-neighborhood statistic as $U^*_{d,r}:\text{IRS}_d\to[0,1]^{\mathcal{F}^*_d(r)}$ as 
\[
U^*_{d,r}(\theta):=\{F_\bullet\mapsto u^*_{d,r}(F_\bullet,\theta)\},
\]
where $u^*_{d,r}(F_\bullet,H)$ is the probability that the random subgroup $\theta$ encodes $F_\bullet$, in the sense that the radius $r$ ball around a vertex with stabilizer $\theta$ is isomorphic to $F_\bullet$ as a rooted edge-labeled graph.
Notice that the Aldous--Lyons conjecture is equivalent to the statement that $U_{d,r}^*(\mathcal{G}_{d})$ is dense in $U_{d,r}^*(\mathrm{IRS}_d)$ for any $d$ and $r$.

Given $k>0$ let $W_d(k)$ be the ball of radius $k$ around the identity in the Cayley graph of $\mathbb{F}_d$ with respect to the standard generating set. 
A $k$-\textbf{pseudo-subgroup} of $\mathbb{F}_d$ is a subset of $W_d(k)$ which is the intersection of $W_d(k)$ with a subgroup of $\mathbb{F}_d$. 
Then a $k$-\textbf{pseudo-IRS} is defined as a Borel probability measure on $\mathcal{P}(W_d(k))\subseteq \mathcal{P}(\mathbb{F}_d)$ which is invariant under conjugation by the standard generators (when this conjugation is defined). 
Similarly to the above, we denote the set of $k$-pseudo-IRSs of $\mathbb{F}_d$ by $\text{P-IRS}_d(k)$. In fact, it follows that 
\[
    \text{IRS}_d=\bigcap_k\text{P-IRS}_d(k)
\] 
as a decreasing intersection.

It is not hard to see that Theorem \ref{thm:ineffectiveness} implies that one cannot algorithmically confirm whether a given open set covers all statistics of IRSs coming from an action on a finite space (i.e., there is no Turing machine which halts precisely when the containment holds, see also \cite{bowen2024aldouslyonsconjectureisubgroup}). By contrast, using pseudo-subgroups, it can be algorithmically confirmed whether such an open set covers all the IRSs. 


\begin{theorem}\label{thm:upper_approx}
There is a Turing machine $M(d,r,S)$, which takes as input a number of generators $d$, radius $r$, and $S\subseteq[0,1]^{\mathcal{F}^*_d(r)}$ a union of finitely many open $d_\infty$-neighborhoods (centered at rational points) such that $M(d,r,S)$ halts if and only if $U^*_{d,r}(\text{IRS}_d)\subseteq S$.
\end{theorem}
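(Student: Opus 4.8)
The plan is to build the Turing machine $M(d,r,S)$ as a semidecision procedure that searches over finer and finer pseudo-IRS approximations and halts as soon as it certifies the containment. The key structural fact to exploit is the identity $\text{IRS}_d = \bigcap_k \text{P-IRS}_d(k)$ as a decreasing intersection, together with a compactness argument: $\text{P-IRS}_d(k)$ is a closed (hence compact) subset of $\mathrm{Prob}(\mathcal{P}(W_d(k)))$, and the $r$-neighborhood statistic map $U^*_{d,r}$ factors through $\mathcal{P}(W_d(k))$ for any $k \ge r$ (since the radius-$r$ ball around a vertex is determined by the intersection of its stabilizer with $W_d(k)$). So there is a well-defined continuous map, call it $U^*_{d,r}: \text{P-IRS}_d(k) \to [0,1]^{\mathcal{F}^*_d(r)}$ for each $k\ge r$, and $U^*_{d,r}(\text{IRS}_d) = \bigcap_{k \ge r} U^*_{d,r}(\text{P-IRS}_d(k))$, again a decreasing intersection of compact sets.

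The core topological step is the following: since $S$ is open and $U^*_{d,r}(\text{IRS}_d) \subseteq S$, and $U^*_{d,r}(\text{IRS}_d)$ is the decreasing intersection of the compact sets $U^*_{d,r}(\text{P-IRS}_d(k))$, a standard compactness argument shows that $U^*_{d,r}(\text{P-IRS}_d(k)) \subseteq S$ for some finite $k$. Conversely, if $U^*_{d,r}(\text{P-IRS}_d(k)) \subseteq S$ for some $k$, then $U^*_{d,r}(\text{IRS}_d) \subseteq S$ trivially. So the containment holds if and only if it holds at some finite level $k$, and the machine $M$ should dovetail over all $k \ge r$, at each stage attempting to verify $U^*_{d,r}(\text{P-IRS}_d(k)) \subseteq S$ and halting upon success.

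It remains to argue that the predicate ``$U^*_{d,r}(\text{P-IRS}_d(k)) \subseteq S$'' is decidable for fixed $k$. Here the point is that $\text{P-IRS}_d(k)$ is a set of probability measures on the \emph{finite} set of $k$-pseudo-subgroups of $\mathbb{F}_d$ (a subset of $\mathcal{P}(W_d(k))$, and $W_d(k)$ is finite), cut out by finitely many linear conjugation-invariance constraints with rational coefficients; hence it is a rational polytope, computable from $d$ and $k$. The map $U^*_{d,r}$ is linear on this polytope (the statistic of a measure is a convex combination of the statistics of point masses at pseudo-subgroups), so $U^*_{d,r}(\text{P-IRS}_d(k))$ is itself a rational polytope, namely the convex hull of the finitely many points $U^*_{d,r}(\delta_H)$ as $H$ ranges over $k$-pseudo-subgroups. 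Checking whether this polytope is contained in the open set $S$ (a finite union of open rational balls) is a decidable question of real/rational linear arithmetic — for instance, by a linear-programming feasibility check, or by observing that a compact polytope is covered by an open set iff each of its vertices lies in the interior of the cover and then using a finite compactness/LP argument; the rationality of all data makes this effective.

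The main obstacle is the last point: one must be careful that ``polytope $\subseteq$ open union of balls'' is genuinely decidable and not merely semidecidable. Containment of a compact convex polytope in an \emph{open} set is not as immediate as containment in a closed set; the clean way around this is to note that since both the polytope and its image are computable rational objects, one can decide $U^*_{d,r}(\text{P-IRS}_d(k)) \subseteq S$ by, e.g., deciding for each ball $B \subseteq S$ and each face of the polytope whether the face is covered, or more robustly by a Lebesgue-number argument: compute a rational $\delta > 0$ such that $S$ contains the closed $\delta$-neighborhood of $U^*_{d,r}(\text{P-IRS}_d(k))$ iff $U^*_{d,r}(\text{P-IRS}_d(k)) \subseteq S$ — but since this equivalence can fail at the boundary, the safest route is to instead semidecide the containment at level $k$ \emph{and} semidecide the (open) complementary condition ``$U^*_{d,r}(\text{P-IRS}_d(k')) \not\subseteq S$'' by searching for a rational pseudo-IRS whose statistic lies outside $S$; dovetailing these and using that $U^*_{d,r}(\text{IRS}_d) \subseteq S$ forces success at some finite stage while $U^*_{d,r}(\text{IRS}_d) \not\subseteq S$ forces the complementary search to succeed (because then some $\text{P-IRS}_d(k')$ has a statistic outside the closed set $S^c$... ). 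One must check this dichotomy is exhaustive; the cleanest formulation, which I would adopt, is to only ever attempt to verify the containment (halt on success, loop forever otherwise), and carry out the polytope-containment check at level $k$ exactly, justifying decidability of ``rational polytope $\subseteq$ finite union of open rational balls'' as a standalone lemma in semialgebraic/linear arithmetic.
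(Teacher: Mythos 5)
Your proposal follows essentially the same route as the paper's proof: express $U^*_{d,r}(\text{IRS}_d)$ as the decreasing intersection of the compact sets $U^*_{d,r}(\text{P-IRS}_d(k))$, use compactness to reduce the open containment to a single finite level $k$, and then decide the level-$k$ containment by rational linear arithmetic, searching over $k$ and halting on success. One small correction: the radius-$r$ ball around a vertex is determined by the stabilizer's intersection with $W_d(2r)$, not $W_d(r)$ (two words of length $\le r$ reach the same vertex iff their quotient, of length $\le 2r$, lies in the stabilizer), so the search should run over $k\ge 2r$. Your concern in the last paragraph about deciding ``rational polytope $\subseteq$ finite union of open rational balls'' is legitimate but resolves exactly as you suggest: the paper settles it by citing that the linear inequalities cutting out $U^*_{d,r}(\text{P-IRS}_d(k))$ are uniformly computable and that consistency of finite systems of rational (in)equalities is decidable, which covers the non-convex complement of $S$ by a finite case split over its defining disjuncts.
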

\begin{proof}
First, we note that the value $U^*_{d,r}(\theta)$ is determined by its restriction to $W_d(2r)$ (which is a pseudo-IRS). In particular, we can extend the map to $U^*_{d,r}:\text{P-IRS}_d(k)\to[0,1]^{\mathcal{F}^*_d(r)}$ for any $k\geq 2r$. 
We have 
\[
    U^*_{d,r}(\text{IRS}_d)=\bigcap_{k\geq 2r}U^*_{d,r}(\text{P-IRS}_d(k)).
\] 
Since the map $U^*_{d,r}$ is continuous and $U^*_{d,r}(\text{IRS}_d)$ is a closed set, by compactness of $U^*_{d,r}(\text{IRS}_d)$, if $U^*_{d,r}(\text{IRS}_d)\subseteq S$ then there is some $k\geq 2r$ such that 
$
    U^*_{d,r}(\text{P-IRS}_d(k))\subseteq S.
$

Since the linear inequalities defining $U^*_{d,r}(\text{P-IRS}_d(k))$ are computable uniformly in $d$ and $k$ \cite[Lemma 2.16]{bowen2024aldouslyonsconjectureisubgroup}, and since checking the consistency of finite families of rational inequalities is computable, the condition $U^*_{d,r}(\text{P-IRS}_d(k))\subseteq S$ is also computable. 
Thus, the Turing machine $M(d,r,S)$ can search through all $k\geq 2r$ and halt when it determines $U^*_{d,r}(\text{P-IRS}_d(k))\subseteq S$.
\end{proof}

\begin{remark}\label{rem:upper_approx}
It follows that if for all $d$ and $r$ the $U_{d,r}^*(\mathcal{G}_{d})$ were dense in $U_{d,r}^*(\mathrm{IRS}_d)$ (i.e.\ the Aldous--Lyons conjecture holds) then, by Theorem~\ref{thm:upper_approx}, there would have been a computable upper bound to $N^*_d(\ve,r)$. 
Indeed, a brute force search through finite $\mathbb{F}_d$ Schreier graphs would eventually enumerate a finite collection whose $r$-neighborhood statistics form an $\ve$-cover of $U_{d,r}^*(\mathcal{G}_{d})$, equivalently of $U^*_{d,r}(\text{IRS}_d)$, and will detect this in finite time.
\end{remark}

\section{A Decision Problem for Local Statistics}
\label{sec:decision}

In light of Remark \ref{rem:upper_approx}, there is a potential alternative approach to resolving the Aldous-Lyons conjecture avoiding non-local games by directly showing that for some $\Delta\geq3$ there is no computable sparse regularity bound $N_\Delta(\epsilon, r)$. We point out that this problem can be rephrased as a decision problem about graphs with specified local statistics.

\begin{definition}
    Fix $\Delta\geq3$. A \textbf{local statistic decision function} (LSDF) is a function $\Phi_\Delta(\varepsilon, r, S)$ that takes as input a tolerance $\varepsilon>0$, a radius $r$, and a set $S\subseteq [0,1]^{\mathcal{F}_\Delta(r)}$ (specified by a finite list of inequalities) and outputs ``yes'' or ``no'' such that:
    \begin{itemize}
        \item if the output is ``yes'' then there is a finite graph of degree at most $\Delta$ whose $r$-neighborhood statistic is in the set $S$,
        \item if the output is ``no'' then there is no finite graph of degree at most $\Delta$ such that the $\varepsilon$ ball around the $r$-neighborhood statistic is a subset of $S$.
    \end{itemize}
\end{definition}

Note that if we allowed $\varepsilon=0$ in the LSDF, then the function would just determine whether there exists a finite graph with specified local statistics. Such a function cannot be computable since the local statistics of a bounded degree graph can encode the execution of a Turing machine, such that the machine halts if and only if a finite graph with these local statistics exists. The stronger statement that there is no computable LSDF is in fact equivalent to the non-computability of a sparse regularity bound $N_\Delta(\varepsilon,r)$.

\begin{theorem}
\label{t:LSDF}
Fix $\Delta\geq3$. A local statistic decision function $\Phi_\Delta(\varepsilon,r,S)$ can be computed from a sparse regularity bound $N_\Delta(\varepsilon,r)$ and vice versa.
\end{theorem}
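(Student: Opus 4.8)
The plan is to prove the equivalence in two directions, establishing a mutual computable reduction between a local statistic decision function $\Phi_\Delta(\varepsilon, r, S)$ and a sparse regularity bound $N_\Delta(\varepsilon, r)$.

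For the direction computing $\Phi_\Delta$ from $N_\Delta$: given inputs $(\varepsilon, r, S)$, first compute $N := N_\Delta(\varepsilon/2, r)$ using the oracle. Then enumerate all finite graphs $G$ of degree at most $\Delta$ on at most $N$ vertices — a finite list — and for each one compute the point $U_r(G) \in [0,1]^{\mathcal{F}_\Delta(r)}$. Check whether any such $G$ has $U_r(G) \in S$; if so, output ``yes''. Otherwise output ``no''. Correctness of the ``yes'' branch is immediate. For the ``no'' branch, suppose for contradiction that there is a finite graph $H$ of degree at most $\Delta$ whose $\varepsilon$-ball around $U_r(H)$ lies in $S$. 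By the regularity bound applied to $H$ with error $\varepsilon/2$, there is a graph $H'$ on at most $N$ vertices with $d_\infty(U_r(H), U_r(H')) < \varepsilon/2 < \varepsilon$, so $U_r(H') \in S$ and $H'$ appears in our enumeration — contradicting that we output ``no''. The subtlety here is getting the quantifiers right: the ``no'' condition of an LSDF only rules out graphs with a whole $\varepsilon$-ball inside $S$, which is exactly what the $\varepsilon/2$ slack in the regularity bound lets us detect.

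For the converse, computing $N_\Delta$ from $\Phi_\Delta$: fix $\varepsilon, r$. The idea is to search for the bound by querying the oracle on a systematic family of sets. Enumerate finite graphs $G_1, G_2, \ldots$ of degree at most $\Delta$; at stage $n$, let $\mathcal{N}_n = \{G_1, \ldots, G_n\}$ and consider whether the $(\varepsilon/2)$-neighborhoods of the points $U_r(G_1), \ldots, U_r(G_n)$ already cover $U_r(\mathcal{G}_\Delta)$. To test this using $\Phi_\Delta$: let $S_n = \bigcup_{i \le n} B_{\varepsilon}(U_r(G_i))$ (an open set specified by finitely many rational inequalities — one may round the centers to rationals and shrink slightly); then $\mathcal{N}_n$ is an $\varepsilon$-net for $U_r(\mathcal{G}_\Delta)$ precisely when every finite graph $H$ has some $\varepsilon$-ball-sized portion of its statistic landing inside $S_n$ — which is the negation of the ``there exists a graph whose $\varepsilon$-ball is disjoint from $S_n$'' condition. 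More carefully, feed the complement or an appropriate variant of $S_n$ to $\Phi_\Delta$; when $\Phi_\Delta$ returns an answer certifying that no finite graph escapes the net, halt and output $\max_{i \le n} |V(G_i)|$ as $N_\Delta(\varepsilon, r)$. The total-boundedness argument of Section~\ref{sec:graphings} guarantees a finite $\varepsilon/2$-net exists, hence this search terminates, and any graph $G$ is then within $\varepsilon$ (in $d_\infty$, hence in each coordinate) of some $G_i$ in the net.

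The main obstacle I expect is the bookkeeping in the converse direction: phrasing ``$\mathcal{N}_n$ is an $\varepsilon$-net'' as a single query (or a bounded computable sequence of queries) to $\Phi_\Delta$, given that $\Phi_\Delta$ has the asymmetric yes/no semantics with an $\varepsilon$-ball gap between the two cases. One must choose the radii carefully — using, say, $\varepsilon/3$-balls for the net and feeding sets built from $\varepsilon/3$-neighborhoods so that the ``no'' guarantee of $\Phi_\Delta$ (no graph with an $(\varepsilon/3)$-ball inside the complement region) translates exactly into ``every graph is within $\varepsilon$ of the net'', while the ``yes'' answers, which could occur at intermediate stages, are simply ignored until the search detects completion via a separate check on the finitely many enumerated graphs. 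Handling the rational-specification constraint on $S$ (approximating the true $\varepsilon$-balls around possibly-irrational statistics $U_r(G_i)$ by rational inequalities) is routine but needs a line or two. Everything else — the finiteness of $\mathcal{F}_\Delta(r)$, computability of $U_r(G)$ from $G$, and equivalence of $d_\infty$ with coordinatewise closeness — is standard.
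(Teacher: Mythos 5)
Your proposal is correct, and the first direction is exactly the paper's argument (the paper queries the oracle at error $\varepsilon$ rather than $\varepsilon/2$, but your extra slack is harmless). The second direction reaches the same goal by a slightly different route: the paper runs a greedy procedure, repeatedly feeding $\Phi_\Delta(\varepsilon/2,r,\cdot)$ the complement of the $\varepsilon/2$-balls around the current net and adding a \emph{minimal-cardinality} witness whenever the answer is ``yes''; termination then needs a two-step argument (minimality forces any infinite run to produce an $\varepsilon/2$-net, and compactness forces a finite prefix to already suffice, a contradiction). You instead take the candidate net at stage $n$ to be the first $n$ graphs of a fixed enumeration, which makes termination immediate from total boundedness: some finite $\varepsilon/2$-net exists and eventually appears as a prefix, at which point $\Phi_\Delta$ \emph{must} answer ``no'' because its ``yes'' clause would require an actual graph with statistics outside the union of balls. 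This is a genuinely simpler termination argument, at the cost of a potentially larger (but still valid) bound. Two small clean-ups you flagged as obstacles are in fact non-issues: the statistics $u_r(F_\bullet,G)$ are rational by definition, so the balls are already rationally specified; and no ``separate check'' on the enumerated graphs is needed --- querying $\Phi_\Delta(\varepsilon/2,r,S_n^c)$ at each stage and halting on the first ``no'' already yields an $\varepsilon$-net, since a ``no'' answer means no graph's $\varepsilon/2$-ball avoids all the $\varepsilon/2$-balls of the net, hence every graph is within $\varepsilon/2+\varepsilon/2=\varepsilon$ of some net point.
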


\begin{proof}
First assume we have oracle access to a sparse regularity bound $N_\Delta(\varepsilon,r)$. We describe an algorithm to compute a local statistic decision function $\Phi_\Delta(\varepsilon,r,S)$.

Given a tolerance $\varepsilon$, a radius $r$, and a set of local statistics $S$, we first enumerate all graphs of degree at most $\Delta$ with at most $N_\Delta(\varepsilon,r)$ vertices. If the local statistics of one of these graphs is in $S$, then output ``yes'' and otherwise output ``no''.

If the output is ``yes'' then clearly there is a finite graph with $r$-local statistics in $S$. Now suppose there is a finite graph $G$ such that the $\varepsilon$ ball around the $r$-neighborhood statistics of $G$ is contained in $S$. By regularity, there is a graph $G'$ with at most $N_\Delta(\varepsilon,r)$ many vertices whose $r$-neighborhood statistics are within $\varepsilon$ of $G$. Thus the $r$-neighborhood statistics of $G'$ is also in $S$ so $\Phi_\Delta(\varepsilon,r,S)$ outputs ``yes'' as desired.

Conversely, assume that we have oracle access to a local statistic decision function $\Phi_\Delta(\varepsilon,r,S)$. Given $\varepsilon$ and $r$ we recursively build a finite set of graphs $\mathscr{G}$ such that every graph of degree at most $\Delta$ has $r$-neighborhood statistics within $\varepsilon$ of some graph in $\mathscr{G}$. Call such a set $\mathscr{G}$ an $\varepsilon$-net. Then we let $N_\Delta(\varepsilon,r)$ be the maximum number of vertices of a graph in the $\varepsilon$-net $\mathscr{G}$.

Initially we let $\mathscr{G}$ be empty and we recursively apply the following. Let $S$ be the complement in $[0,1]^{\mathcal{F}_\Delta(r)}$ of the union of the $\varepsilon/2$ balls around the statistics of each element of $\mathscr{G}$. If $\Phi_\Delta(\varepsilon/2,r,S)$ returns ``no'' then we conclude that $S$ does not contain the $\varepsilon/2$ ball around the statistics of any graph of degree at most $\Delta$. This implies that any graph of degree at most $\Delta$ is within $\varepsilon$ of some graph in $\mathscr{G}$ in $r$-neighborhood statistics, so $\mathscr{G}$ is an $\varepsilon$-net. If $\Phi_\Delta(\varepsilon/2,r,S)$ returns ``yes'' then there is some $G$ of degree at most $\Delta$ with $r$-neighborhood statistics in $S$. By going through a fixed enumeration of finite graphs, we can find such a $G$ with minimal cardinality. We add $G$ to $\mathscr{G}$ and iterate.

It remains to argue that this recursive procedure eventually returns a ``no'' response so that the process stops. So suppose that an infinite sequence is enumerated into $\mathscr{G}=\{G_1,G_2,\ldots\}$. First, we argue that the infinite set $\mathscr{G}$ is an infinite $\varepsilon/2$-net. Otherwise let $G$ be a graph of degree at most $\Delta$ which is not within $\varepsilon/2$ of any $G_i$ in $r$-neighborhood statistics. Let $k$ be such that $G_k$ has more vertices than $G$. Then by minimality we have that $G$ is enumerated into $\mathscr{G}$ as $G_k$ because it has $r$-neighborhood statistics at least $\varepsilon/2$ away from each of $G_1,\ldots,G_{k-1}$. This is a contradiction, and so $\mathscr{G}$ is an infinite $\varepsilon/2$ net. By compactness there is a finite subset $\{G_1,\ldots,G_j\}$ which is also a $\varepsilon/2$-net. But then the procedure would have stopped at stage $j$, because there are no graphs of degree at most $\Delta$ with $r$-neighborhood statistics at least $\varepsilon/2$ from each of $G_1,\ldots,G_j$.

\end{proof}

\section{Open Problems}\label{sec:open}

We now present several open questions. First, Theorem \ref{t:LSDF}, i.e., the rephrasing as a decision problem motivates a potential alternative approach to the resolution of the Aldous-Lyons conjecture:
\begin{oproblem}
Is it possible to directly encode Turing machines into local statistics of bounded degree graphs to show that there is no computable LSDF?    
\end{oproblem}

The present note is concerned with the bounded degree graphs, and, as mentioned in the introduction, the analogous statement is not true in the other end of the spectrum, i.e., for dense graphs. It is possible to build a continuous spectrum connecting bounded degree graphs to dense graphs, parametrized by a real, which measures their density, and to ask appropriate versions of approximation theorems (see, e.g., \cite{backhausz2022action,frenkel2018convergence} for different variants). This naturally leads to the following question.

\begin{oproblem} Can one describe the phase transition from approximable to inapproximable, as the density decreases?
\end{oproblem}

It would be extremely interesting for example, if, as the density decreases, the bound in Theorem \ref{t:szemeredi} would increase, eventually yielding a non-computable growth rate.

Another natural direction could be the following. Let $\mathcal{A}_r\subseteq \mathbb{R}^{\mathcal{F}_\Delta(r)}$ be the set of all probability distributions of $r$-neighborhoods around the root in $G$, where $G$ ranges over all finite uniformly-rooted graphs. Similarly, let $\mathcal{A}_r'\subseteq \mathbb{R}^{\mathcal{F}_\Delta(r)}$ be the same, but where $G$ ranges through all graphings. Clearly, the closure of $\mathcal{A}_r$, denoted by $\overline{\mathcal{A}_r}$, is a contained in $\mathcal{A}_r'$. The Aldous--Lyons conjecture can be stated as $\overline{\mathcal{A}_r}=\mathcal{A}_r'$ for every $r$. The following problems were mentioned by Lov\'asz \cite[page 358]{lovasz2012large} as potential ways of refuting the Aldous--Lyons conjecture. In light of Theorem~\ref{thm:ineffectiveness} we still find them interesting.
\begin{oproblem}
    Find an explicit $r>0$ such that $\overline{\mathcal{A}_r}\neq \mathcal{A}_r'$.
\end{oproblem}

One can also restrict the attention to some finite set of rooted graphs $F_1,F_2,\ldots,F_m \in {\mathcal{F}_\Delta(r)}$ and to each finite graph $G$ one can associate a vector $(u_r(F_1,G),\ldots,u_r(F_m,G))\subset \mathbb{R}^m$. Let $U_r(F_1,F_2,\ldots,F_m)$ denote the set of all such vectors as $G$ ranges over all finite uniformly-rooted graphs. Again, let $U'_r(F_1,F_2,\ldots,F_m)$ be the same, but where $G$ ranges through all graphings.

\begin{oproblem}
    Given $r>0$ and finite graphs $\{F_i\}_{i=1}^m$, determine the sets ${\overline{U}_r(F_1,F_2,\ldots,F_m)}$ and ${U'_r(F_1,F_2,\ldots,F_m)}$.
\end{oproblem}

Harangi \cite{Harangi} considered this question with $m=2$, and $F_1,F_2$ are cycles of length $3$ and $4$. For any given $r$ his result gives an explicit description of the corresponding sets and yields that they do coincide. We also would like to highlight another question of Ab\'ert stated in \cite[Question 1.3]{Harangi} about the approximation of the expected spectral measures of graphings by the eigenvalue distribution of finite graphs.

\appendix

\section{Encoding graphs and Schreier graphs}\label{ssec:encoding}

The goal of this appendix is to prove the equivalence of Theorem~\ref{thm:ineffectiveness} and Theorem~\ref{main} by encoding free group Schreier graphs with bounded degree graphs and vice versa. It is a well known fact that these objects encode each other (see e.g.~\cite{Gross,AldousLyons,grigorchuk2011some,Cannizzo,TothURG} and references therein) and is often mentioned without further explanation. Although the particular encoding presented below is also known to the specialists we were not able to find where it is written explicitly.

First, there is a straightforward way of encoding any locally finite graph as an $\mathbb{F}_2$ Schreier graph. For a (simple, connected) graph $G=(V,E)$, we build an $\mathbb{F}_2$ Schreier graph $G^*$ with vertex set equal to the set of directed edges $V(G^*)=E(G).$ For each $v\in V(G)$ fix a cyclic ordering of the edges with source $v$. This defines an action on $a:\Z\acts V(G^*)$. There is also an action $b:\Z\acts V(G^*)$ which exchanges the source and target of the directed edge. Together these define an $\mathbb{F}_2$ action.

\tikzset{
		Big dot/.style={
			circle, inner sep=0pt, 
			minimum size=2.5mm, fill=black
		}
	}
\begin{figure}[htb]
\begin{center}
	\begin{tikzpicture}[thick, scale=0.7]
	\node[Big dot] (L1) at (-2,-1) {};
    \node[Big dot] (L2) at (-4,0) {};
    \node[Big dot] (L3) at (-2,1) {};
	\node[Big dot] (L4) at (-6,0) {};
    \node[Big dot] (L5) at (-6,2) {};
    \node[Big dot] (L6) at (-6,-2) {};
    \node[Big dot] (L7) at (-8,0) {};
    \draw [line width=0.5mm] (L3)-- (L1) -- (L2) -- (L4) -- (L7);
    \draw [line width=0.5mm] (L3)-- (L2);
    \draw [line width=0.5mm] (L5)-- (L6);
    
    \draw [dashed,->] (-1.5,0) -- (0,0);

    \node[Big dot] (R7) at (2,0) {};
    \node[Big dot] (R41) at (4,0) {};
    \node[Big dot] (R42) at (5,1) {};
	\node[Big dot] (R43) at (6,0) {};
    \node[Big dot] (R44) at (5,-1) {};
    \node[Big dot] (R5) at (5,3) {};
    \node[Big dot] (R6) at (5,-3) {};
    \node[Big dot] (R21) at (8,0) {};
    \node[Big dot] (R22) at (9,1) {};
    \node[Big dot] (R23) at (9,-1) {};
    \node[Big dot] (R31) at (11,2) {};
    \node[Big dot] (R32) at (12.2,2) {};
    \node[Big dot] (R11) at (11,-2) {};
    \node[Big dot] (R12) at (12.2,-2) {};

    \draw [->] (R7) to [bend left=30] node [above, sloped] (TextNode1) {$b$} (R41);
    \draw [<-] (R7) to [bend right=30] node [below, sloped] (TextNode1) {$b$} (R41);
    \draw [->] (R41) to node [above,sloped] (TextNode1) {$a$} (R42);
    \draw [->] (R42) to node [above, sloped] (TextNode1) {$a$} (R43);
    \draw [->] (R43) to node [below, sloped] (TextNode1) {$a$} (R44);
    \draw [->] (R44) to node [below, sloped] (TextNode1) {$a$} (R41);
    \draw [->] (R42) to [bend left=30] node [left] (TextNode1) {$b$} (R5);
    \draw [<-] (R42) to [bend right=30] node [right] (TextNode1) {$b$} (R5);
    \draw [->] (R6) to [bend left=30] node [left] (TextNode1) {$b$} (R44);
    \draw [<-] (R6) to [bend right=30] node [right] (TextNode1) {$b$} (R44);
    
    \draw [->] (R43) to [bend left=30] node [above] (TextNode1) {$b$} (R21);
    \draw [<-] (R43) to [bend right=30] node [below] (TextNode1) {$b$} (R21);

    \draw [->] (R21) to node [above,sloped] (TextNode1) {$a$} (R22);
    \draw [->] (R22) to node [right] (TextNode1) {$a$} (R23);
    \draw [->] (R23) to node [below, sloped] (TextNode1) {$a$} (R21);

    \draw [->] (R22) to [bend left=30] node [above] (TextNode1) {$b$} (R31);
    \draw [<-] (R22) to [bend right=30] node [below] (TextNode1) {$b$} (R31);

    \draw [->] (R31) to [bend left=20] node [above] (TextNode1) {$a$} (R32);
    \draw [<-] (R31) to [bend right=20] node [below] (TextNode1) {$a$} (R32);
    
    \draw [->] (R23) to [bend left=30] node [above] (TextNode1) {$b$} (R11);
    \draw [<-] (R23) to [bend right=30] node [below] (TextNode1) {$b$} (R11);

    \draw [->] (R11) to [bend left=20] node [above] (TextNode1) {$a$} (R12);
    \draw [<-] (R11) to [bend right=20] node [below] (TextNode1) {$a$} (R12);
    
    \draw [->] (R32) to [bend left=20] node [right] (TextNode1) {$b$} (R12);
    \draw [<-] (R32) to [bend right=20] node [left] (TextNode1) {$b$} (R12);

    \draw[->] (5,3) .. controls (4,4.5) and (6,4.5) .. node[midway, above] {$a$}(5.2,3.15);
    \draw[->] (5,-3) .. controls (4,-4.5) and (6,-4.5) .. node[midway, below] {$a$}(5.2,-3.15);
    \draw[->] (2,0) .. controls (0.5,-1) and (0.5,1) .. node[midway, left] {$a$}(1.85,0.2);
    \end{tikzpicture}	
\end{center}
\caption{Example of encoding a graph as an $\mathbb{F}_2$ Schreier graph.}
\end{figure}
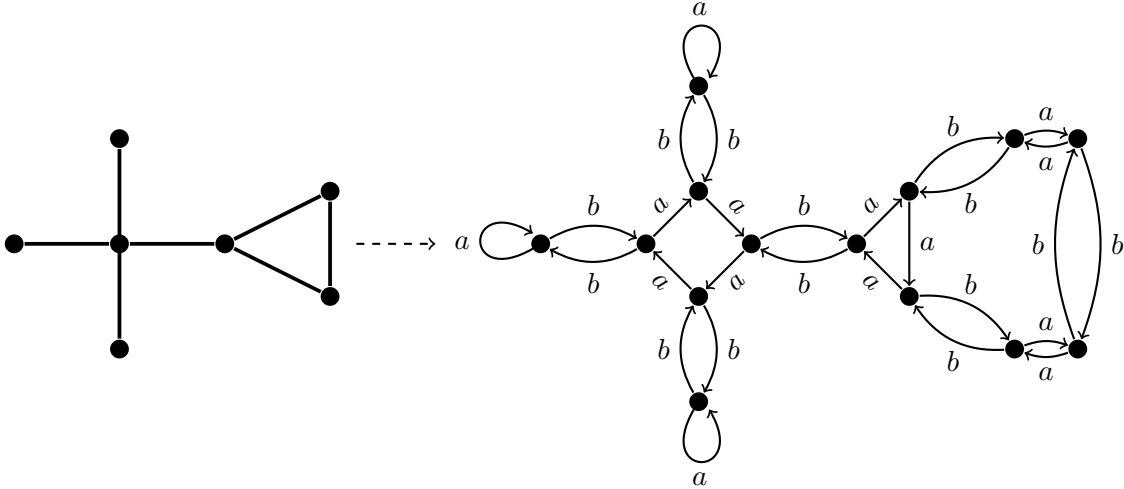

For the converse direction let $G$ be an $\mathbb{F}_d$ Schreier graph. We encode this into a simple graph $\hat{G}$ of maximum degree $3$. For each vertex $v$ of $G$ and each $\ell\in\{a_1,\ldots,a_d,a_1^{-1},\ldots,a_d^{-1}\}$ we have a vertex $v_\ell\in \hat{G}$ and these vertices are arranged in a cycle. Additionally, for each $i\in\{1,\ldots,d\}$ and edge $(x,y)$ labeled by $a_i$, there are vertices and edges forming a gadget that connects $x_{a_i}$ and $y_{a_i^{-1}}$ as in Figure~\ref{fig:direct_edges}. Therefore the only cycles of length $2d$ in $\hat{G}$ are are those corresponding to vertices of $G$, and each vertex of such a cycle is the start or end of a distinct gadget, one corresponding to each $\ell\in\{a_1,\ldots,a_d,a_1^{-1},\ldots,a_d^{-1}\}$.

\tikzset{
		big dot/.style={
			circle, inner sep=0pt, 
			minimum size=1mm, fill=black
		}
	}
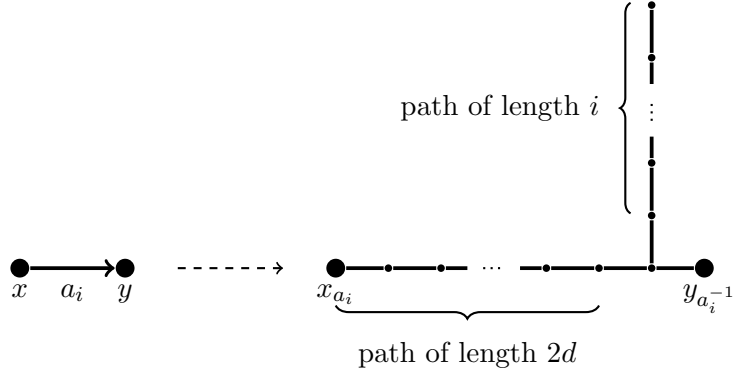
\begin{figure}[htb]
\begin{center}
    \begin{tikzpicture}[thick, scale=0.7]
    \node[Big dot] (C1) at (-2,0) {};
    \node[Big dot] (C2) at (-4,0) {};
    \draw (-3,-0.1) node[below] {$a_i$};
    \draw (-4,-0.1) node[below] {$x$};
    \draw (-2,-0.1) node[below] {$y$};
    \draw [line width=0.5mm,->] (C2) -- (C1);

    \draw [dashed,->] (-1,0) -- (1,0);
    \node[Big dot] (C3) at (2,0) {};
    \draw (2,-0.1) node[below] {$x_{a_i}$};
    \node[Big dot] (C4) at (9,0) {};
    \draw (9.1,-0.1) node[below] {$y_{a_i^{-1}}$};
    \node[big dot] (d1) at (3,0) {};
    \node[big dot] (d2) at (4,0) {};
    \node[big dot] (d3) at (6,0) {};
    \node[big dot] (d4) at (7,0) {};
    \node[big dot] (d5) at (8,0) {};
    \draw [line width=0.5mm] (C3) -- (d1) -- (d2) -- (4.5,0);
    \draw [dotted] (4.8,0) to (5.2,0);
    \draw [line width=0.5mm] (5.5,0) -- (d3) -- (d4) -- (d5) -- (C4);
    \draw [decorate,decoration={brace,amplitude=5pt,mirror,raise=3ex}] (2,0) -- (7,0) node[midway,yshift=-3em]{path of length $2d$};
    \node[big dot] (b1) at (8,1) {};
    \node[big dot] (b2) at (8,2) {};
    \node[big dot] (b3) at (8,4) {};
    \node[big dot] (b4) at (8,5) {};
    \draw [line width=0.5mm] (d5) -- (b1) -- (b2) -- (8,2.5);
    \draw [dotted] (8,2.8) to (8,3.2);
    \draw [line width=0.5mm] (8,3.5) -- (b3) -- (b4);
    \draw [decorate,decoration={brace,amplitude=4pt},xshift=-0.4cm,yshift=1.2pt] (8,1) -- (8,5) node [midway,right,xshift=-3.2cm] {path of length $i$}; 
    \end{tikzpicture}	
\end{center}
\caption{Example of the encoding of a directed edge}\label{fig:direct_edges}
\end{figure}

Both of these encodings are defined uniquely up to the choice of cyclic ordering at each vertex. Additionally, both encodings are injective maps and therefore do not lose information. It will be important in the following proof that both of these encodings are defined locally, and that they can both be decoded with small errors in the space of neighborhood statistics.

\begin{proof}[Proof of Lemma~\ref{lem:equiv}]

First fix $\Delta\geq3$ and suppose we have oracle access to a Schreier regularity bound $N_2^*(\varepsilon,r)$. We show how to compute a sparse regularity bound $N_\Delta(\varepsilon,r)$.

First let $G$ be graph of maximum degree at most $\Delta$, and let $G^*$ be its encoding as a Schreier graph as defined above. We know that for any $\varepsilon_0$ and $r_0$ there is an $\mathbb{F}_2$ Schreier graph $G^*\text{$'$}$ with $|V(G^*\text{$'$})|\leq N^*_2(\varepsilon_0,r_0)$ such that $$|u_{r_0}(F_\bullet,G^*)-u_{r_0}(F_\bullet,G^*\text{$'$})|<\varepsilon_0$$ for any rooted edge-labeled graph $F_\bullet\in\mathcal{F}^*_2(r_0)$. The graph $G^*\text{$'$}$ may not be in the image of the encoding map, but we can still decode some simple graph of degree at most $\Delta$ from the incidence of $a$-cycles and $b$-cycles in the action of $\mathbb{F}_2$ on $V(G^*\text{$'$})$.

We can say that a cycle for the action of $a$ on $V(G^*\text{$'$})$ is `good' if it is of length at most $\Delta$, and if each vertex of the $a$-cycle is part of a $b$-cycle of length $2$ with a distinct vertex outside of the $a$-cycle. We then form a graph $G'$ of degree at most $\Delta$ with `good' cycles as vertices and edges between `good' cycles if there is a $b$-cycle of length $2$ between them. By choosing $\varepsilon_0$ small enough and $r_0$ large enough we can ensure that at most $\varepsilon$ fraction of the vertices of $G^*\text{$'$}$ are not part of good cycles, since every vertex of $G^*$ is part of a good cycle and this is a locally property (i.e.~determined by a neighborhood with uniformly bounded radius).

In fact, by choosing $\varepsilon_0$ small and $r_0$ large enough, we can ensure $$|u_r(F_\bullet,G)-u_r(F_\bullet,G')|<\varepsilon$$ for any rooted graph $F_\bullet\in \mathcal{F}_{\Delta} (r)$. Therefore we can let $N_\Delta(\varepsilon,r)$ be the maximum number of vertices in one of these decoded graphs $G'$.

Conversely fix $d\geq2$ and suppose we have oracle access to a sparse regularity bound $N_3(\varepsilon,r)$. We show how to compute a Schreier regularity bound $N^*_d(\varepsilon,r)$.

Let $G$ be an $\mathbb{F}_d$ Schreier graph, and let $\hat{G}$ be its encoding as a simple rooted graph of degree at most $3$. For any $\varepsilon_0$ and $r_0$ there is a graph $\hat{G}'$ of degree at most $3$ with $|V(\hat{G}')|\leq N_3(\varepsilon_0,r_0)$ such that $$|u_{r_0}(F_\bullet,\hat{G})-u_{r_0}(F_\bullet,\hat{G}')|<\varepsilon_0$$ for any rooted graph $F_\bullet$ of radius $r_0$. As before, the graph $\hat{G}'$ may not be in the image of the encoding map, but we can decode some $\mathbb{F}_d$ Schreier graph from the small cycles and the gadgets connecting them.

We say that a cycle in $\hat{G}'$ is `good' if it has length $2d$ and each vertex is the start or end of a distinct gadget, one corresponding to each $\ell\in\{a_1,\ldots,a_d,a_1^{-1},\ldots,a_d^{-1}\}$, as in the encoding construction. We can form an $\mathbb{F}_d$ Schreier graph $G'$ with `good' cycles as vertices and edges between `good' cycles if there is a gadget between them. The symbol $a_i$ (or $a_i^{-1}$) will be part of the label of an edge if that edge (or its reverse) came from the corresponding gadget. So far, this only defines a partial $\mathbb{F}_d$ Schreier graph. For each vertex $v$ and $\ell\in\{a_1,\ldots,a_d,a_1^{-1},\ldots,a_d^{-1}\}$ there is at most one edge out of $v$ with $\ell$ as part of its label. But for each $i\in\{1,\ldots,n\}$ we have an equal number of vertices missing $a_i$ as part of an edge coming out as those missing $a_i$ as part of an edge coming in. This can be seen by analyzing the components of the subgraph of edges labeled $a_i$ or $a_i^{-1}$, which has degree at most $2$ and therefore consists of cycles and paths. Therefore we can add an additional edge from each vertex with $a_i$ as a missing out-label to a vertex with $a_i$ as a missing in-label. This new edge will have $a_i$ as part of its label (and $a_i^{-1}$ in the other direction). This ensures $G'$ is an $\mathbb{F}_d$ Schreier graph. By choosing $\varepsilon_0$ small enough and $r_0$ large enough we can ensure that at most $\varepsilon$ fraction of the vertices of $\hat{G}'$ are not part of good cycles or the gadgets connected to them, since this is true of all vertices in $G^*$ and this property is determined locally.
Finally, as before, by choosing $\varepsilon_0$ small and $r_0$ large enough, we can ensure $$|u_r^*(F_\bullet,G)-u_r^*(F_\bullet,G')|<\varepsilon$$ for any labeled rooted graph $F_\bullet$ of radius $\le r$. Therefore we can let $N^*_\Delta(\varepsilon,r)$ be the maximum number of vertices in one of these decoded graphs $G'$.
\end{proof}
    
\section*{Acknowledgements}
We would like to thank Michael Chapman, L\'aszl\'o Lov\'asz, and all the participants of the PSQC reading seminar at Renyi for many insightful conversations. The second author was supported by the ERC Synergy Grant No.~810115 - DYNASNET. The 1st and 3rd authors were partially supported by HAS Momentum Grant 2022-58. The 3rd author was also supported by National Research, Development and Innovation Office (NKFIH) grant no.~146922.

\bibliographystyle{alpha}
\bibliography{AL} 

\end{document}